\newtheorem{thm}{Theorem}[section]
 \newtheorem{cor}[thm]{Corollary}
 \newtheorem{lem}[thm]{Lemma}
 \newtheorem{prop}[thm]{Proposition}
 \theoremstyle{definition}
\newtheorem{defn}[thm]{Definition}
\newtheorem{ex}[thm]{Example}
\newtheorem{qu}[thm]{Question}
 \theoremstyle{remark}
 \numberwithin{equation}{section}
\newcommand{\la}[1]{\mbox{\rm  lann}_R({#1})}
\newcommand{\La}[2]{\mbox{\rm  lann}_{#1}({#2})}
\newcommand{\lS}[1]{\mbox{\rm  Sing}_l({#1})}
\begin{document}
  \date{\today}
  \author[   Ko\c{s}an,  Matczuk]{    M. Tamer  Ko\c{s}an,   Jerzy Matczuk
\address{Department of Mathematics, Gazi University, Ankara,  Turkey}
\email{  tkosan@gmail.com }
\address{Department of Mathematics, University of Warsaw,  ul. Banacha  2, 02-097 Warsaw, Poland}
\email{jmatczuk@mimuw.edu.pl}}
\title{On  fusible rings}
\keywords{Fusible ring, the classical quotient ring, Goldie ring }  \subjclass[2000]{16N20, 16U60}

 \thanks{ }
  \maketitle

\begin{abstract}  We answer in negative two of questions posed in \cite{GM}.  We also establish a new characterization of semiprime left Goldie rings by  showing that a semiprime ring $R$  is left Goldie iff it is regular left fusible and has finite left Goldie dimension.

 \end{abstract}

\section{\bf Introduction}

Throughout this paper, $R$ denotes an associative ring with unity. For a nonempty subset $S\subseteq R$, $\la{A}$  stands for
the left annihilator   of $A$ in $R$, i.e. $\la{A}=\{r\in R\mid aA=0\}$.

An element $a \in R$ is   a left zero-divisor if  $\la{a} \ne 0$. Elements which are not left zero-divisor are  called left regular.

Ghashghaei and McGovern in \cite{GM} introduced and investigated fusible rings. A nonzero   element $r$ of a ring $R$ is called left fusible if $r$ can be presented in a form $r=c+w$, where $c$ is a left zero-divisor and $w$ is left regular. In  such situation   we will say that $r=c+w$ is a left fusible decomposition of $r$. Our ring $R$ is called left fusible if every nonzero  element of $R$ is left fusible.   Let us point out that our notation differs from the one introduced in  \cite{GM} "left zero divisors" are right zero divisors in the meaning of \cite{GM}. Thus our left fusible rings are right fusible in the language of \cite{GM}.  Clearly both domains and clean rings   are examples of fusible rings (i.e., left and right fusible). It was proved in \cite{GM} that polynomial rings and matrix rings over left fusible rings are left fusible.

 The aim of the paper is to continue a study of fusible and related rings. In particular, we answer in negative questions (1) and (3) posed in \cite{GM}. We also introduce regular left fusible rings, a class which is slightly wider than that of left fusible rings and discuss   relations  between those two classes. As a result a new description of semiprime left Goldie rings is given in Theorem \ref{Goldie}. We also prove (cf. Theorem \ref{rlf matrices}) that the regular left fusible ring property lifts to matrix rings for rings having left quotient rings.    Some questions are formulated.

\newpage
 \section{\bf Results}
We begin with the following definition.
\begin{defn}
  A ring $R$ is said to be regular left fusible if for any nonzero element $r\in R$ there exists a regular (i.e., left and right regular) element $s\in R$  such that the element $sr$ is left fusible, i.e. $sr=c+w$, where $c$ is a left zero divisor and $w$ is left regular.
  \end{defn}
  Since every left regular element is left fusible, our definition reduces to the requirement that for every nonzero left zero divisor $a$ there exists a regular element $s$ in $R$ such that the element $sa$ is left fusible.
 It is also clear that every left fusible ring is regular left fusible (it is enough to take $s=1$). The following example shows that the class of left fusible rings is strictly smaller than the one of  regular left fusible rings.
 \begin{ex}\label{example Cohn}
  Let $F$ be a field and set $R=F<x,y\mid x^2=0>$.  By \cite[Theorem 1]{C}, the set of all left zero divisors of $R$ is equal  to $xR$. In particular,  a difference of any  two left zero divisors is a left zero divisor as well. Thus $R$ is not left fusible.    Notice that $y$ is a regular element of $R$ and $yr=0+yr$ is a left fusible decomposition of $yr$, for any nonzero element $r\in R$. Thus $R$ is  a regular left fusible ring. Similarly, one can see that $R$ is not right fusible but it is  regular right fusible.
 \end{ex}

We will use the above example  to answer  Question (1) of \cite{GM}. Before doing so let us recall some notions.  A ring $R$ is a left p.p. ring if any principal left ideal of $R$ is projective and
 $R$ is said to be   left p.q.-Baer (i.e.,  left principally quasi-Baer)  if the left annihilators of
principal left ideals  are generated as left ideals  by an idempotent. Birkenmeier et al. in \cite[Corollary 1.15]{Bir} proved that a ring
$R$ is an abelian (i.e., with all idempotents   central) left p.p. ring if and only if $R$ is a reduced p.q.-Baer ring.   It was proved in \cite[Corollary 2.15]{GM} that  every abelian  left (right) p.p. ring   is fusible and that, in general,   p.q.-Baer  rings do not have to be left fusible (see \cite[Example 2.17]{GM}). Question (1) of \cite{GM} asks whether  every abelian p.q.-Baer ring is fusible. The following example shows that the answer to this question is negative.

\begin{ex}\label{answ q1}
 Let $R=F<x,y\mid x^2=0>$ be the ring defined in Example \ref{example Cohn}.  Since elements from $yR$ are left regular, we get $ayb\ne 0$, for any nonzero  $a,b\in R$. Thus  $R$ is a prime ring, so  it is also a p.q.-Baer ring.  The only idempotents of $R $ are 0 and 1 (so it is abelian). This can be checked directly or one can apply a classical result of Herstein (cf. \cite{H}).   By \cite[Example 6.]{CPL}, the set of all nilpotent elements of $R$ is equal to $xRx$. The subring $F[x]$ is invariant under all inner  automorphisms adjoint to elements
of the form $(1+n)$, $n\in xRx$ but  $F[x]$ is neither contained in the center nor
contains a nonzero ideal of $R$. Therefore, by     \cite[Theorem]{H},   the ring  $R$ does not contain nontrivial idempotents. Thus $R$ is an abelian   p.q.-Baer ring. We have  already  seen in Example \ref{example Cohn} that $R$ is neither left nor right fusible.
\end{ex}

Example \ref{example Cohn}  also suggests that although   abelian p.q.-Baer rings need not be  fusible but maybe they have to be regular fusible. We are unable to answer the following question:
 \begin{qu}
  Is every abelian left  p.q.-Baer ring regular left fusible?
 \end{qu}

Now we will move to some basic properties of regular left fusible rings. Recall that a left ideal $I$ of a ring $R$ is essential (denoted by $I<_e  {_R}R$) if $A\cap I \neq 0$ for every nonzero left ideal $A$ of $R$.

\begin{lem}\label{lem ann a-b}
 Let $a,b\in R$ be such that $\la{a}<_e {_R}R$ and $\la{b}\ne 0$. Then $\la{a-b}\ne 0$. In particular, every element  element $a$ such that  $\la{a}<_e {_R}R$ can not be left fusible.
\end{lem}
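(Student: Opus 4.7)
The plan is straightforward: produce a common left annihilator of $a$ and $b$ using the hypothesis that $\la{b}$ is nonzero together with the essentiality of $\la{a}$.

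First I would pick a witness $0 \ne r \in \la{b}$, so $rb = 0$. The principal left ideal $Rr$ is nonzero (it contains $r$), hence by essentiality of $\la{a}$ we have $Rr \cap \la{a} \ne 0$. Choose $0 \ne s \in Rr \cap \la{a}$ and write $s = tr$ for some $t \in R$. Then by construction $sa = 0$, and moreover $sb = t(rb) = 0$. Therefore $s(a-b) = sa - sb = 0$, which exhibits $s$ as a nonzero element of $\la{a-b}$. This gives the first statement.

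For the ``in particular'' clause, I would argue by contradiction: suppose $a = c + w$ is a left fusible decomposition, so $c$ is a left zero divisor (hence $\la{c} \ne 0$) and $w$ is left regular (hence $\la{w} = 0$). Applying the first part with $b := c$ yields $\la{a - c} \ne 0$, but $a - c = w$, contradicting left regularity of $w$.

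There is no real obstacle here; the only subtle point is the book-keeping that an element of $Rr \cap \la{a}$ can always be written as $tr$ so that the relation $rb = 0$ can be pushed through to conclude $sb = 0$. The rest is a direct application of the definitions of essentiality and of a left fusible decomposition.
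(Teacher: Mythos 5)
Your proof is correct and takes essentially the same approach as the paper, which simply notes that essentiality of $\la{a}$ together with $\la{b}\ne 0$ forces $\la{a}\cap\la{b}\ne 0$; your $Rr$ is just a nonzero left subideal of $\la{b}$, so intersecting it with $\la{a}$ is the same argument spelled out in more detail. The treatment of the ``in particular'' clause likewise matches the intended reading.
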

\begin{proof}
 The imposed assumptions imply that  $\la{a}\cap\la{b}\ne 0$, so the thesis is clear.
\end{proof}

The left singular ideal of a ring $R$ is $ \lS{R}=\{x\in R\mid \la{x}<_e {_R}R \}$. It is well known that $\lS{R}$ is a two-sided ideal of $R$. A ring $R$ is called left nonsingular if   $\lS{R}=0$.

The following  lemma  generalizes  \cite[Proposition 2.11]{GM}.
\begin{lem}\label{lem singular}
 Every regular left fusible ring $R$ is left nonsingular.
\end{lem}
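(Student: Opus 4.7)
The plan is to argue by contradiction, using Lemma \ref{lem ann a-b} as the key tool: if $sx$ has essential left annihilator, then $sx$ cannot be left fusible, contradicting the regular left fusible hypothesis.

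Suppose, aiming for a contradiction, that there exists a nonzero element $x \in \lS{R}$, so that $\la{x} <_e {_R}R$. By the regular left fusible hypothesis, pick a regular element $s \in R$ such that $sx$ admits a left fusible decomposition $sx = c + w$. Notice that $sx \ne 0$, since $s$ is right regular (in the paper's convention $\ra{s} = 0$) and $x \ne 0$; this is needed because only nonzero elements can be left fusible.

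The second step is to verify that $\la{sx}$ is itself an essential left ideal. One route is simply to quote the well-known fact, already recorded in the paragraph preceding the lemma, that $\lS{R}$ is a two-sided ideal of $R$; hence $x \in \lS{R}$ forces $sx \in \lS{R}$, i.e.\ $\la{sx} <_e {_R}R$. If a self-contained verification is preferred, one argues directly: given a nonzero left ideal $L$ of $R$, the set $Ls$ is a left ideal which is nonzero (else $L \subseteq \la{s} = 0$, since $s$ is left regular), so by essentiality of $\la{x}$ we can pick $\ell \in L$ with $\ell s \ne 0$ and $\ell s \in \la{x}$; then $\ell \ne 0$, $\ell \in L$, and $\ell sx = 0$, showing $L \cap \la{sx} \ne 0$.

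The third step is to invoke Lemma \ref{lem ann a-b}: since $\la{sx}$ is essential, $sx$ cannot be left fusible. This contradicts the choice of $s$, and therefore no nonzero $x \in \lS{R}$ can exist, i.e.\ $\lS{R} = 0$.

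There is no genuine obstacle here; once Lemma \ref{lem ann a-b} is in hand, the only minor technical point is ensuring that $sx \ne 0$ and that $\la{sx}$ inherits essentiality from $\la{x}$, both of which are immediate from the regularity of $s$.
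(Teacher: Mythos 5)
Your proof is correct and follows essentially the same route as the paper: assume a nonzero $x\in\lS{R}$, use the regular left fusible hypothesis to get a regular $s$ with $sx$ left fusible, observe $sx\in\lS{R}$ because $\lS{R}$ is a two-sided ideal, and derive a contradiction from Lemma \ref{lem ann a-b}. The only additions are the explicit check that $sx\neq 0$ and the optional self-contained verification that $\la{sx}$ is essential, both of which are harmless refinements of the paper's argument.
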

\begin{proof}
Assume $0\ne a\in \lS{R}$.  Since $R$ is regular left fusible, we can pick a regular element  $s\in R$  such that $sa$ is left fusible. Since $\lS{R}$ is a two-sided ideal, $sa\in \lS{R}$. Thus,  by    Lemma \ref{lem ann a-b}, $sa$ is not left fusible,   a contradiction.
\end{proof}

Let $S$ denote the left Ore set consisting of regular element of the ring $R$ and let $S^{-1}R$ be the left Ore localization of $R$. Writing $Q_l(R)$ we will mean that $R$ possesses    a classical left quotient ring which is equal to $Q_l(R)$.
  \begin{lem} \label{zero divisors}  Let $s\in S$ and $a \in R$. Then
      $\La{S^{-1}R}{s^{-1}a  } \neq 0$   if and only if $\la{a}\neq 0$.

      \end{lem}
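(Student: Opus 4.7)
The plan is to observe that $s^{-1}$ is a unit in $S^{-1}R$ and thus to reduce the statement to comparing the annihilator of $a$ in $R$ with its annihilator in $S^{-1}R$.

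Concretely, since $s\in S$ is invertible in $S^{-1}R$, right multiplication by $s$ is a bijection of $S^{-1}R$. Using associativity, $x(s^{-1}a)=(xs^{-1})a$, so $x\in\La{S^{-1}R}{s^{-1}a}$ iff $xs^{-1}\in\La{S^{-1}R}{a}$. Setting $y=xs^{-1}$ gives the set equality $\La{S^{-1}R}{s^{-1}a}=\La{S^{-1}R}{a}\cdot s$, and since $s$ is a unit this shows $\La{S^{-1}R}{s^{-1}a}\neq 0$ iff $\La{S^{-1}R}{a}\neq 0$. The statement thus reduces to proving that $\La{S^{-1}R}{a}\neq 0$ iff $\la{a}\neq 0$.

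For the ``if'' direction, any nonzero $b\in\la{a}$ can be viewed as $1^{-1}b\in S^{-1}R$, and since the canonical map $R\to S^{-1}R$ is injective (its kernel is $\{r\in R\mid \exists u\in S,\ ur=0\}$, which is trivial because $S$ consists of regular, in particular right regular, elements), the image $b$ is a nonzero element of $\La{S^{-1}R}{a}$. For the converse, I would take a nonzero $u^{-1}b\in\La{S^{-1}R}{a}$ with $u\in S$ and $b\in R$; from $(u^{-1}b)a=u^{-1}(ba)=0$ and the invertibility of $u^{-1}$, I get $ba=0$ in $S^{-1}R$, and by the same injectivity $ba=0$ already in $R$. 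Since $u^{-1}b\neq 0$ forces $b\neq 0$, we conclude $0\neq b\in\la{a}$.

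No single step is really the main obstacle here; the whole argument is a short manipulation of fractions. The only subtlety worth highlighting is the injectivity of $R\hookrightarrow S^{-1}R$, which is precisely what makes the passage between ``$ba=0$ in $R$'' and ``$ba=0$ in $S^{-1}R$'' lossless, and which in turn uses the fact that elements of $S$ are both left and right regular, as built into the definition of $S$.
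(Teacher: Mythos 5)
Your proof is correct and follows essentially the same route as the paper: a direct manipulation of left fractions in $S^{-1}R$, using that every element has the form $u^{-1}b$ and that $R\hookrightarrow S^{-1}R$ is injective. The only organizational difference is that you first strip off the unit $s^{-1}$ to reduce to comparing $\La{S^{-1}R}{a}$ with $\la{a}$, whereas the paper keeps $s^{-1}a$ intact and instead invokes the left Ore condition to rewrite $bs^{-1}=t^{-1}c$; both amount to the same standard facts about the localization.
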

   \begin{proof}
      Suppose $bs^{-1}a=0$, for some nonzero $b\in S^{-1}R$. Eventually multiplying on the left by a suitable element from $S$,  we may assume $0\ne b\in R$. Since $S$ is the left Ore set, there exists $t\in S$ and $0\ne c\in R$ such that $bs^{-1}=t^{-1}c$. Then $0\ne c\in \la{a}$.

    Suppose $ca=0$, for some $0\ne c \in R$. Then $(cs) s^{-1}a=0$, so $s^{-1}a$ is a left zero divisor in  $S^{-1}R$.
       \end{proof}

     \begin{prop}\label{main prop} Let $R$ be a ring.
     \begin{enumerate}
       \item If $R$ is left fusible, then so is $S^{-1}R$.
       \item If $R$ is regular left fusible, then $Q_l(R)$ is left fusible.
       \item  If $S^{-1}R$ is regular left fusible, then $R$ is regular left fusible.
       \item Suppose every regular element  of $R$ is invertible. Then   $R$ is regular left fusible iff   $ R$ is left fusible.
     \end{enumerate}

    \end{prop}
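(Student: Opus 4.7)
My plan is to use Lemma \ref{zero divisors} throughout as the bridge between $R$ and $S^{-1}R$: it shows that for $s\in S$ and $a\in R$, the element $s^{-1}a$ is a left zero divisor (respectively left regular) in $S^{-1}R$ if and only if $a$ is a left zero divisor (respectively left regular) in $R$. This single fact converts every fusible decomposition in $S^{-1}R$ into one in $R$ and vice versa, modulo clearing denominators.

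For (1), given a nonzero $s^{-1}a\in S^{-1}R$, I decompose $a=c+w$ in $R$ using left fusibility; then $s^{-1}a=s^{-1}c+s^{-1}w$, and Lemma \ref{zero divisors} certifies that $s^{-1}c$ is a left zero divisor and $s^{-1}w$ is left regular in $S^{-1}R$. For (2), I pick a regular $t\in R$ (invertible in $Q_l(R)$ by hypothesis) with $ta=c+w$ left fusible in $R$, and then write $s^{-1}a=(ts)^{-1}c+(ts)^{-1}w$; the lemma again finishes the job. Part (4) then follows: one direction is the remark that every left fusible ring is regular left fusible, and for the converse, when every regular element of $R$ is invertible one has $Q_l(R)=R$, so (2) yields left fusibility directly.

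The substantive step is (3). Given $0\ne a\in R$, regular left fusibility of $S^{-1}R$ produces a regular $q\in S^{-1}R$ with $qa=c'+w'$ left fusible in $S^{-1}R$. Writing $q=t^{-1}s$ with $t\in S$ and $s\in R$, the element $s=tq$ is regular in $S^{-1}R$ as a product of regular elements; since $R\hookrightarrow S^{-1}R$, any element of $R$ that is regular in $S^{-1}R$ is already regular in $R$, so $s$ is regular in $R$. Multiplying the fusible decomposition by $t$ yields $sa=tc'+tw'$ in $S^{-1}R$; choosing a common left denominator $u\in S$, I can write $u\cdot tc'=x$ and $u\cdot tw'=y$ with $x,y\in R$, so $(us)a=x+y$ inside $R$. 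Lemma \ref{zero divisors} identifies $x$ as a left zero divisor and $y$ as left regular in $R$, while $us$ is regular in $R$ as a product of regular elements, so this decomposition witnesses regular left fusibility of $R$. The principal obstacle across the whole proposition is exactly the denominator bookkeeping in (3): each time an element is transported between $R$ and $S^{-1}R$ one must re-verify that the fusibility labels are preserved, which is precisely what Lemma \ref{zero divisors} is engineered to handle.
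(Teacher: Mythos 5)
Your proof is correct and takes essentially the same route as the paper's: every part is reduced to Lemma \ref{zero divisors} by clearing left common denominators, and your treatment of (3) is the same transfer argument the paper uses, just with the denominator bookkeeping organized slightly differently (and (4) deduced from (2) rather than recomputed).
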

  \begin{proof}
(1) Let $s^{-1}r\in S^{-1}R$. As $R$ is left fusible, $r$ has a left fusible decomposition, say  $r=a+w$. Lemma \ref{zero divisors} shows that $s^{-1}r=s^{-1}a+s^{-1}w$ is a left fusible decomposition of $s^{-1}r$.

(2) Suppose $R$ is regular left fusible. Let $0\ne s^{-1}r\in Q_l(R)$. Then there exists a regular element $t\in R$ such that $tr$ has a fusible decomposition $tr=a+w$. By Lemma  \ref{zero divisors}, $s^{-1}r=(ts)^{-1}a+(ts)^{-1}w$ is a fusible decomposition of $s^{-1}r$ in $Q_l(R)$.

(3) Let $r\in R$. Since $S^{-1}R$ is regular left fusible, there exists a regular element $s_1^{-1}t_1\in S^{-1}R$ such that the element $z={s_1}^{-1}t_1r$ has a left fusible decomposition, say ${s_1}^{-1}t_1r={s_2}^{-1}a_2+ {s_3}^{-1}w_3$.  Taking the left common denominator,   we may  write $z=s^{-1}tr=s^{-1}a+ s^{-1}w$ for suitable $s\in S,\;  t,a,w\in R$. By Lemma  \ref{zero divisors}, $t $ is  regular, $w$ is  left regular and $a $ is  a left zero divisor of $R$. This implies that the element $tr$ is  left fusible in $R$  and hence $R$ is regular left fusible.

(4) By assumption, $R=Q_l(R)$.  Let $r\in R$. Suppose $R$ is regular left fusible. There exists a regular element $t\in R$ such that $tr$ has a fusible decomposition $tr=a+w$.    Then, by Lemma \ref{zero divisors},  $   r= t^{-1}a+t^{-1}w $ is a fusible decomposition of $r$. This yields  the proof of (4).
  \end{proof}
The above proposition gives immediately the following:

\begin{cor} \label{equivalence reg. l.fus} Suppose $R$ has the left quotient ring $Q_l(R)$. The following conditions are equivalent:\begin{enumerate}
\item $R$ is regular left fusible.
\item  $Q_l(R)$ is left fusible.
\item   $Q_l(R)$ is regular left fusible.

                                          \end{enumerate}

\end{cor}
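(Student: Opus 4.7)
The plan is to extract all three implications directly from Proposition \ref{main prop} and the trivial observation that every left fusible ring is automatically regular left fusible (with $s=1$), so essentially no new work is required; the corollary is really just a bookkeeping consequence of what has already been proved. I would arrange the argument as a cycle $(1)\Rightarrow(2)\Rightarrow(3)\Rightarrow(1)$.

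First, for $(1)\Rightarrow(2)$ I would simply invoke Proposition \ref{main prop}(2): the hypothesis that $Q_l(R)$ exists means exactly that $S^{-1}R = Q_l(R)$ for $S$ the set of regular elements of $R$, and part (2) of the proposition then says that if $R$ is regular left fusible, $Q_l(R)$ is left fusible. Next, for $(2)\Rightarrow(3)$, I would note the observation made in the paragraph following the definition of regular left fusibility: taking the regular element $s = 1$ shows that any left fusible ring is a fortiori regular left fusible, which applies in particular to $Q_l(R)$.

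Finally, for $(3)\Rightarrow(1)$, I would apply Proposition \ref{main prop}(3) with $S^{-1}R = Q_l(R)$: that part of the proposition states precisely that regular left fusibility descends from the Ore localization back to $R$. This closes the cycle.

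The only mild subtlety is that one could equivalently shortcut $(2)\Leftrightarrow(3)$ via Proposition \ref{main prop}(4), because in $Q_l(R)$ every regular element is invertible; I would mention this as an alternative but stick with the cyclic derivation above for uniformity. There is no real obstacle here, since every implication is a direct citation of a part of Proposition \ref{main prop} or of the elementary remark that $s=1$ witnesses regular left fusibility for a left fusible ring.
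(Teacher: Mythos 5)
Your proposal is correct and matches the paper's intent exactly: the paper states the corollary follows immediately from Proposition \ref{main prop}, and your cycle $(1)\Rightarrow(2)\Rightarrow(3)\Rightarrow(1)$ via parts (2) and (3) of that proposition, together with the trivial $s=1$ observation, is precisely the intended derivation.
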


In \cite[Propsition 3.14]{GM},  the authors observed that the statement $(1)$ of Proposition \ref{main prop}  holds when $R$ is a commutative ring and asked whether the reverse implication holds. In the context of Corollary \ref{equivalence reg. l.fus}, their question can be restated as:

\begin{qu}\label{question 1}
Let  $R$ be a regular left fusible ring having  the classical left quotient ring.  Is $R$   left fusible?
\end{qu}

 Notice that, although the ring  $R$  in Example  \ref{example Cohn} is not left fusible, the ring $R$ is regular left fusible. Thus  the above question has     a negative answer without the assumption that $R$ possesses the classical  left quotient ring.

Using the same arguments as in \cite[Lemma 2.19.]{GM} and making use of the fact that in a commutative ring $R$ an element $r$ is nilpotent if and only if the ideal  $rR$ is nilpotent we   get the following modification of \cite[Lemma 2.19.]{GM}.
\begin{lem}\label{comm reguced}
 Every commutative  regular fusible ring  is reduced.
\end{lem}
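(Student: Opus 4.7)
The plan is to argue by contradiction: assume $R$ is commutative and regular fusible but not reduced, and derive a conflict with the nonsingularity established in Lemma~\ref{lem singular}. Concretely, I will produce a nonzero element $b\in R$ with $b^2=0$, show that $\la{b}$ is essential in ${_R}R$, and then combine this with regular fusibility to reach a contradiction.

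To produce $b$, I will invoke the hinted equivalence: a nonzero nilpotent $a\in R$ gives a nonzero nilpotent principal ideal $aR$. Let $n\ge 2$ be minimal with $(aR)^n=0$ and pick any nonzero $b\in (aR)^{n-1}$. Since $2(n-1)\ge n$, one has $b^2\in (aR)^{2(n-1)}\subseteq (aR)^n=0$, so $b^2=0$. This is precisely where the hinted ``nilpotent element iff nilpotent principal ideal'' fact enters, and where the argument parallels the scheme of \cite[Lemma~2.19]{GM}.

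The main step, and where commutativity is used decisively, is verifying that $\la{b}$ is essential as a left ideal whenever $b^2=0$. Given a nonzero left ideal $J$ of $R$, either $bJ=0$, in which case $J\subseteq J\cap\la{b}$ is already nonzero; or there is some $j\in J$ with $bj\ne 0$, and then $bj\in J$ (as $J$ is a left ideal and $b\in R$ is central) while $b(bj)=b^2 j=0$, so $0\ne bj\in J\cap\la{b}$. Hence $\la{b}$ meets every nonzero left ideal, i.e., it is essential in ${_R}R$.

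To finish, apply regular fusibility of $R$ to $b$: there is a regular $s\in R$ such that $sb$ is left fusible. Since $s$ is regular and $b\ne 0$, $sb\ne 0$, while $(sb)^2=s^2b^2=0$. The previous paragraph, applied to $sb$ in place of $b$, then shows $\la{sb}$ is essential, so Lemma~\ref{lem ann a-b} forces $sb$ to be non-fusible --- a contradiction. (Equivalently, one can simply observe $b\in\lS{R}$, which vanishes by Lemma~\ref{lem singular}.) I anticipate no serious obstacle; the essentiality computation is immediate once $b^2=0$, and the hinted equivalence is precisely what converts a generic nilpotent into a square-zero element.
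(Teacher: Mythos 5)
Your proof is correct and follows essentially the route the paper intends: the paper only sketches the argument by citing \cite[Lemma 2.19]{GM} together with the fact that a nilpotent element of a commutative ring generates a nilpotent principal ideal, and your write-up (reduce to a square-zero element $b$, show $\la{b}$ is essential using commutativity, then contradict Lemma \ref{lem ann a-b} or equivalently Lemma \ref{lem singular}) is exactly that argument spelled out in full. No gaps.
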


In \cite[Propsition 3.7]{GM},  the authors showed that a commutative ring  with only finitely many minimal prime ideals is fusible  if and only
if it is reduced. Using this characterization we obtain a positive, partial answer to Question \ref{question 1}:

\begin{prop}\label{min-prime} Let   $R$ be a commutative ring with only finitely many minimal prime ideals. The following conditions are equivalent:
 \begin{enumerate}
                                            \item  $R$ is   fusible.
                                            \item  $R$ is regular   fusible.
                                            \item  $R$ is reduced.
                                          \end{enumerate}
\end{prop}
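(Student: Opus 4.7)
The proposition is a three-way equivalence, and a cyclic implication chain $(1)\Rightarrow(2)\Rightarrow(3)\Rightarrow(1)$ looks most economical, since two of the three implications are essentially already available in the paper.

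The implication $(1)\Rightarrow(2)$ is immediate from the discussion following the definition of a regular left fusible ring: taking $s=1$ in the defining condition shows that any fusible element is automatically regular fusible, hence every fusible ring is regular fusible. No finiteness of minimal primes is needed here.

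For $(2)\Rightarrow(3)$ I would simply invoke Lemma \ref{comm reguced}, which asserts that every commutative regular fusible ring is reduced. Again the assumption on minimal primes plays no role in this step.

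The only implication that really uses the hypothesis on minimal primes is $(3)\Rightarrow(1)$, and this is precisely the content of \cite[Proposition 3.7]{GM}: a commutative ring with finitely many minimal prime ideals is fusible iff it is reduced. So I would just cite this result to close the loop. The main (and only) conceptual obstacle is recognizing that $(2)\Rightarrow(3)$ is already packaged in Lemma \ref{comm reguced}; once that observation is made, the proof is a one-line assembly of (i) triviality, (ii) Lemma \ref{comm reguced}, and (iii) \cite[Proposition 3.7]{GM}. No further computation or structural argument is required.
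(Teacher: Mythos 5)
Your proof is correct and coincides with the paper's own argument: the same cyclic chain $(1)\Rightarrow(2)$ (tautology), $(2)\Rightarrow(3)$ via Lemma \ref{comm reguced}, and $(3)\Rightarrow(1)$ via \cite[Proposition 3.7]{GM}. Nothing to add.
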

\begin{proof} The implication $(1)\Rightarrow (2)$ is a tautology.
The implications $(2)\Rightarrow (3)$  and $(3)\Rightarrow (1)$ are given by Lemma \ref{comm reguced} and \cite[Propsition 3.7]{GM}, respectively.
\end{proof}
  \cite[Example 3.13]{GM} offers a  commutative ring   which is reduced but   is not fusible, i.e.  the equivalence $(1)\iff (3)$ does not hold for arbitrary commutative rings. The ring from this example is also not regular fusible. We do not know of any example of a commutative ring which is regular fusible but not fusible.

A ring $R$ is called unit-regular if for each $a \in R$ there exists a unit $u \in U(R)$ such that $aua = a$. In \cite[Proposition 5.3]{GM}, the following proposition was proved. We offer a short direct argument.
\begin{prop}\label{unit regular}
 Every unit-regular ring is left fusible.
\end{prop}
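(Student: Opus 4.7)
The strategy is to let unit-regularity supply an idempotent intrinsic to $a$ and then split $a$ into the sum of an ``idempotent-related'' piece and a unit. Given a nonzero $a\in R$, pick a unit $u\in U(R)$ with $aua=a$ and set $e:=au$. The first step is the routine verification that $e$ is a nonzero idempotent satisfying $ea=a$: right-multiplying $aua=a$ by $u$ yields $(au)(au)=au$, i.e.\ $e^{2}=e$; the same identity gives $ea=aua=a$; and $e\neq 0$ because $a\neq 0$.

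The second step is to read off the decomposition. From $e=au$ we have $a=eu^{-1}$, and I would rewrite this as
\[
a=(e-1)u^{-1}+u^{-1}.
\]
The summand $w:=u^{-1}$ lies in $U(R)$, so is in particular left regular. The summand $c:=(e-1)u^{-1}$ is killed on the left by $e$: indeed $ec=(e^{2}-e)u^{-1}=0$, and since $e\neq 0$ this shows $0\neq e\in\la{c}$, so $c$ is a left zero-divisor. Hence $a=c+w$ is a left fusible decomposition of $a$.

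I do not expect any genuine obstacle; the only point that merits a moment's attention is the degenerate case $e=1$ (i.e.\ $au=1$), in which $c=0$. Even then $a=u^{-1}$ is a unit, hence left regular, so the presentation $a=0+a$ still qualifies as a left fusible decomposition in the convention used in Example \ref{example Cohn}. A symmetric argument with $f:=ua$ in place of $e$ would moreover give the right-sided statement, confirming that unit-regular rings are fusible on both sides.
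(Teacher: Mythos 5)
Your proof is correct. Every step checks out: $e=au$ is a nonzero idempotent with $ea=a$, the summand $(e-1)u^{-1}$ is left-annihilated by $e\neq 0$ and hence is a left zero-divisor (this remains true in the degenerate case $e=1$, since $0$ has nonzero left annihilator, consistent with the convention $yr=0+yr$ used in Example \ref{example Cohn}), and $u^{-1}$ is left regular. However, your route is genuinely more direct than the paper's. The paper takes $e=ur$, decomposes the idempotent itself as $e=(-1+e)+(1-2e)$, and concludes only that $R$ is \emph{regular} left fusible (the unit $u$ playing the role of the regular multiplier); it then upgrades this to left fusible by observing that in a unit-regular ring every regular element is invertible and appealing to Proposition \ref{main prop}(4). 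You instead translate the idempotent decomposition back to $a$ itself via $a=eu^{-1}=(e-1)u^{-1}+u^{-1}$, producing an explicit left fusible decomposition of $a$ with no detour through regular left fusibility or the quotient-ring machinery. Both arguments hinge on the same elementary fact that $e$ left-annihilates $e-1$; what the paper's version buys is an illustration of its general framework (regular left fusibility plus invertibility of regular elements), while yours buys self-containedness, a shorter dependency chain, and, via the symmetric choice $f=ua$, an immediate proof that unit-regular rings are fusible on both sides.
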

\begin{proof}
 Let $R$ be a unit-regular ring and $r\in R$, i.e., $0\ne r=rur$ where $u\in R$.  Then $e=ur$ is an idempotent and $e=(-1+e) + (1-2e)$ is its fusible decomposition. This shows that $R$ is regular left fusible. Now, since every regular element in $R$ is invertible,  the thesis is a direct consequence of Proposition \ref{main prop}(3).
\end{proof}

 Proposition \ref{main prop} and Corollary \ref{equivalence reg. l.fus}  enable us to give the following  new characterization of semiprime left Goldie rings:

\begin{thm}\label{Goldie}
For a semiprime ring $R$, the following conditions are equivalent:
\begin{enumerate}
  \item $R$ is left Goldie.
  \item $R$ is regular left fusible and has finite left Goldie dimension
\end{enumerate}
\end{thm}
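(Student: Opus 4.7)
The plan is to deduce both directions from Corollary~\ref{equivalence reg. l.fus} together with Goldie's classical theorem for semiprime rings and its well-known nonsingular reformulation.

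For the implication $(1)\Rightarrow(2)$, I would invoke Goldie's theorem: a semiprime left Goldie ring admits a classical left quotient ring $Q_l(R)$ which is semisimple artinian. Any semisimple artinian ring is unit-regular, hence left fusible by Proposition~\ref{unit regular}. Corollary~\ref{equivalence reg. l.fus} then transports left fusibility of $Q_l(R)$ back to regular left fusibility of $R$. Finite left Goldie dimension is part of the definition of a left Goldie ring, so $(2)$ follows immediately.

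For the converse $(2)\Rightarrow(1)$, my first step would be to apply Lemma~\ref{lem singular} to deduce $\lS{R}=0$, i.e. $R$ is left nonsingular. At this point $R$ is semiprime, left nonsingular, and of finite left Goldie dimension, and I would close the argument by citing the classical characterization due to Goldie (see e.g.\ Goodearl's monograph on nonsingular rings): a semiprime ring of finite left Goldie dimension is left Goldie if and only if it is left nonsingular.

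The main obstacle is really concentrated in that last citation. The nontrivial content is that, for a semiprime ring with finite uniform dimension, the vanishing of $\lS{R}$ forces every essential left ideal to contain a regular element; this yields both the left Ore condition on the regular elements and ACC on left annihilators, giving existence of $Q_l(R)$ and the Goldie property. Once this ingredient is invoked, the rest of the argument is a straightforward assembly of Lemma~\ref{lem singular}, Proposition~\ref{unit regular}, and Corollary~\ref{equivalence reg. l.fus}.
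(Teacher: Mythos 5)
Your proposal is correct and follows essentially the same route as the paper's own proof: for $(1)\Rightarrow(2)$ both arguments pass through Goldie's theorem, unit-regularity of the semisimple artinian ring $Q_l(R)$, Proposition~\ref{unit regular}, and Corollary~\ref{equivalence reg. l.fus}, while for $(2)\Rightarrow(1)$ both apply Lemma~\ref{lem singular} and then the nonsingular formulation of Goldie's theorem. Your explicit remark that the real content of the converse sits in the classical fact that a semiprime, left nonsingular ring of finite left Goldie dimension is left Goldie is a fair and accurate identification of what the paper leaves to the citation of ``Goldie's Theorem.''
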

\begin{proof}
 Suppose $R$ is left Goldie. Then, by Goldie's Theorem, $Q_l(R)$ is a semisimple artinian ring. In particular it has finite left Goldie dimension and it is a unit-regular ring. Thus, by Proposition \ref{main prop}, $Q_l(R)$ is  left fusible.    Now Corollary \ref{equivalence reg. l.fus}  yields that $R$ is regular left fusible.  This completes the proof of $(1)\Rightarrow (2)$.

 Let $R$ be as in $(2)$. Then, by Lemma \ref{lem singular}, $R$ is left nonsingular. Now the implication $(2)\Rightarrow (1)$ is a consequence of Goldie's Theorem.
\end{proof}
Recall  that  a ring  $R$ a right complemented, if for each $a\in R$, there is a
$b\in R$ such that $ab = 0$ and $a + b$ is regular (see \cite{GM}). By  \cite[Proposition 3.4]{GM}, every right complemented ring is left fusible.
  \cite[Question (3)]{GM}  asks whether  a right complemented ring have always possesses  a right classical quotient ring?
 Since any domain is   complemented on both sides and there are domains having no a classical right quotient ring, the answer to this question is no in general. However  the above theorem gives immediately the following:

 \begin{cor}
  Every right (left) complemented ring of finite left Goldie dimension has a classical left quotient ring.
 \end{cor}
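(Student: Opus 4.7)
The plan is to reduce the corollary to Theorem \ref{Goldie}: it suffices to show that a (right or left) complemented ring $R$ of finite left Goldie dimension is semiprime and regular left fusible, whereupon Theorem \ref{Goldie} yields that $R$ is left Goldie and Goldie's theorem supplies the classical left quotient ring $Q_l(R)$.

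First I would verify that a right (or left) complemented ring is reduced, in particular semiprime. The idea is to take any $a\in R$ with $a^2=0$, invoke the complemented property to produce $b\in R$ with $ab=0$ (resp.\ $ba=0$) and $a+b$ regular, and then compute $a(a+b)=a^2+ab=0$ (resp.\ $(a+b)a=a^2+ba=0$); left (resp.\ right) regularity of $a+b$ forces $a=0$. Hence $R$ has no nonzero square-zero elements, and an easy induction on the index of nilpotency shows $R$ is reduced.

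Next I would invoke \cite[Proposition 3.4]{GM}, which asserts that every right complemented ring is left fusible. For the left complemented case, reducedness gives $xy=0\iff yx=0$ for all $x,y\in R$, so the left and right complemented notions coincide and \cite[Proposition 3.4]{GM} applies equally. Thus $R$ is left fusible, and a fortiori regular left fusible.

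At this point Theorem \ref{Goldie}, applied to the semiprime, regular left fusible ring $R$ of finite left Goldie dimension, yields that $R$ is left Goldie; Goldie's theorem then delivers $Q_l(R)$. The main conceptual hurdle is the mismatch between the combinatorial complemented hypothesis and the semiprimeness built into Theorem \ref{Goldie}, but the square-zero computation above closes this gap in one line, after which the remainder is a bookkeeping assembly of the machinery developed earlier in the paper.
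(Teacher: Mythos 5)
Your proof is correct and takes essentially the same route as the paper: reduce to Theorem \ref{Goldie} by observing that a complemented ring is reduced (hence semiprime) and, by \cite[Proposition 3.4]{GM}, left fusible (hence regular left fusible), so that $R$ is left Goldie and Goldie's theorem supplies $Q_l(R)$. The only difference is that you verify reducedness directly with the square-zero computation and spell out why the left-complemented case collapses to the right-complemented one via $xy=0\iff yx=0$ in reduced rings, details the paper leaves to the citation and to ``the left version'' of the theorem.
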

\begin{proof}
As observed in \cite[Proposition 3.4]{GM},  every right (left) complemented ring is reduced, so  it is semiprime. Now the thesis is a consequence of the left version of Theorem \ref{Goldie}.
\end{proof}

  One can easily adopt the proof of    \cite[Theorem 2.18]{GM}
to get the following:

\begin{prop}\label{prop. matrices}
  Suppose that for any finite set $r_1,\ldots, r_n\in R\setminus \{0\}$ there exists a regular element $s\in R$ such that the elements $sr_1,\ldots, sr_n$ are left fusible. Then the matrix ring $ M_n(R)$ is regular left fusible.
  \end{prop}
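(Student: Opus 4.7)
The plan is to reduce the statement to the entrywise-fusible case by a scalar-matrix trick, then invoke the construction of \cite[Theorem 2.18]{GM}. Given a nonzero matrix $A=(a_{ij})\in M_n(R)$, we list its (at most $n^2$) nonzero entries as $r_1,\dots,r_k$ and, by the standing hypothesis of the proposition, choose a single regular $s\in R$ such that every $sr_i$ is left fusible in $R$. Setting $T:=sI_n$, we observe that $T$ is a regular element of $M_n(R)$: if $XT=0$ or $TX=0$ for some $X\in M_n(R)$, then entrywise this reads $X_{ij}s=0$ or $sX_{ij}=0$, and the regularity of $s$ in $R$ forces $X=0$.

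With $B:=TA=sA$, every nonzero entry of $B$ has the form $sr_i$, and is therefore left fusible in $R$. It is consequently enough to establish the following auxiliary statement: a matrix $B\in M_n(R)$ whose nonzero entries are left fusible in $R$ is itself left fusible in $M_n(R)$. This is exactly what the argument of \cite[Theorem 2.18]{GM} delivers; that proof takes a matrix over a left fusible ring and explicitly assembles a decomposition $B=C+W$ in $M_n(R)$ with $C$ a left zero divisor and $W$ left regular, and its only use of the global left fusibility of $R$ is to split the entries of $B$ individually into left zero divisor plus left regular summands. Since in our setting those entries are already individually left fusible, the construction transfers verbatim and produces the required decomposition of $B=TA$.

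Putting these pieces together, $TA$ admits a left fusible decomposition in $M_n(R)$ while $T$ itself is regular, so $M_n(R)$ is regular left fusible. The main point requiring care, and really the only non-bookkeeping step, is verifying that the construction in \cite[Theorem 2.18]{GM} depends solely on entrywise fusibility of the matrix under consideration rather than on fusibility of arbitrary elements of $R$; this is precisely why the author describes the adaptation as easy, and it is the sort of verification I would relegate to a short remark in the written-out proof.
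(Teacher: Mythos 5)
Your argument is exactly the adaptation of \cite[Theorem 2.18]{GM} that the paper intends (its entire proof is the remark that one can easily adopt that argument): the hypothesis is formulated precisely so that the single scalar matrix $sI_n$ serves as the regular multiplier, after which the decomposition of $sA$ is delegated to the construction of \cite[Theorem 2.18]{GM}, which indeed only uses left fusibility of the entries of the matrix being decomposed. The proposal is correct and takes essentially the same route as the paper.
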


With the help of the above proposition we get:
\begin{thm}\label{rlf matrices}
  Suppose $R$ is a regular left fusible ring having the classical left quotient ring. Then the matrix ring $ M_n(R)$ is regular left fusible.
\end{thm}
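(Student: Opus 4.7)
The strategy is to verify the hypothesis of Proposition \ref{prop. matrices}: for any finite subset $r_1,\ldots,r_k$ of $R\setminus\{0\}$ I will produce a single regular $s\in R$ such that each $sr_i$ admits a left fusible decomposition in $R$. Once this is done, Proposition \ref{prop. matrices} delivers the conclusion immediately.

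By Corollary \ref{equivalence reg. l.fus}, the hypothesis that $R$ is regular left fusible and has a classical left quotient ring yields that $Q_l(R)$ is left fusible. Each $r_i\in R\subseteq Q_l(R)$ therefore admits a left fusible decomposition $r_i=c_i+w_i$ in $Q_l(R)$, with $c_i$ a left zero divisor and $w_i$ left regular in $Q_l(R)$. Writing $c_i=u_i^{-1}a_i$ and $w_i=v_i^{-1}b_i$ with $u_i,v_i\in S$ and $a_i,b_i\in R$, an iterated application of the left Ore condition for the set $S$ of regular elements of $R$ produces a common left multiple $s\in S$ of $u_1,\ldots,u_k,v_1,\ldots,v_k$; that is, one obtains $\alpha_i,\beta_i\in R$ with $s=\alpha_i u_i=\beta_i v_i$ for every $i$. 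Consequently $sc_i=\alpha_i a_i$ and $sw_i=\beta_i b_i$ both lie in $R$, and $sr_i=sc_i+sw_i$ is an additive decomposition inside $R$.

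It remains to check that this is actually a left fusible decomposition in $R$. Since $s$ is invertible in $Q_l(R)$, the element $sc_i$ remains a left zero divisor and $sw_i$ remains left regular when viewed in $Q_l(R)$: if $xc_i=0$ with $0\ne x\in Q_l(R)$ then $0\ne xs^{-1}$ left-annihilates $sc_i$, and if $y(sw_i)=0$ in $Q_l(R)$ then $(ys)w_i=0$ forces $ys=0$, so $y=0$. Applying Lemma \ref{zero divisors} with denominator $1$ to the elements $sc_i,sw_i\in R$ transfers the annihilator information back to $R$: $\la{sc_i}\ne 0$ while $\la{sw_i}=0$. Hence $sr_i=sc_i+sw_i$ is a left fusible decomposition in $R$ for every $i$, verifying the hypothesis of Proposition \ref{prop. matrices}. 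No substantial obstacle is anticipated; the only nonformal ingredient is the construction of common left multiples in the left Ore set $S$, which is a routine induction from the left Ore condition.
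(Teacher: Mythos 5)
Your proof is correct, but it takes a different route from the paper's. The paper applies Proposition \ref{prop. matrices} to $Q_l(R)$ (where the hypothesis is trivially satisfied with $s=1$, since $Q_l(R)$ is left fusible by Corollary \ref{equivalence reg. l.fus}), concludes that $M_n(Q_l(R))$ is regular left fusible, identifies $M_n(Q_l(R))$ with the left Ore localization $S^{-1}M_n(R)$ at the diagonal matrices $\mathrm{diag}(s,\ldots,s)$, and then descends to $M_n(R)$ via Proposition \ref{main prop}(3). You instead verify the hypothesis of Proposition \ref{prop. matrices} directly for $R$: you pull each fusible decomposition $r_i=c_i+w_i$ in $Q_l(R)$ back to $R$ by clearing denominators with a common left multiple $s\in S$ of all the $u_i,v_i$, and check via Lemma \ref{zero divisors} that $sc_i$ stays a left zero divisor and $sw_i$ stays left regular in $R$. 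All the steps hold up: the common left multiple exists by the usual induction on the left Ore condition (and $S$ is multiplicatively closed, so $s\in S$), and the annihilator transfers are exactly what Lemma \ref{zero divisors} with denominator $1$ gives. What your argument buys is a slightly stronger intermediate statement --- a regular left fusible ring with a classical left quotient ring admits, for every finite set of nonzero elements, a \emph{single} regular multiplier making them all left fusible --- and it avoids both the identification $M_n(Q_l(R))=S^{-1}M_n(R)$ and the descent step of Proposition \ref{main prop}(3); the paper's route is shorter on the page because it outsources the denominator-clearing to the already-proved localization machinery.
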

\begin{proof}
 By Corollary \ref{equivalence reg. l.fus}, $Q_l(R)$ is left fusible and   Proposition \ref{prop. matrices} shows that $M_n(Q_l(R))$ is regular left fusible.
 Let us notice that $M_n(Q_l(R))=S^{-1}M_n(R)$, where $S$ denotes the set of all diagonal matrices $diag(s,\ldots,s)$, where $s$ ranges over all regular elements of $R$. Now the thesis is a direct consequence of Proposition \ref{main prop}(3).
\end{proof}

Let $R$ be the ring from Example \ref{example Cohn}. Then $y\in R$ is regular and,   for any $0\ne r\in R$, $yr$ is left regular, so $R$ satisfies the assumptions of Proposition \ref{prop. matrices}. Therefore we have:
\begin{ex}
  Let $R$ be the ring from Example \ref{example Cohn}. Then the matrix ring $M_n(R)$ is regular fusible, for any $n\geq 1$.
\end{ex}

We were unable to answer the following:
\begin{qu}
 Is the matrix ring $M_n(R)$ over a regular left fusible ring $R$ itself  regular left fusible?
\end{qu}

We close the paper with an observation that regular left fusible property behaves well under polynomial extensions.  Essentially the same proof  as in \cite[Proposition 2.9]{GM}
gives the following
\begin{prop}\label{poly}
 If $R$ is regular left fusible, then so are the rings $R[x;\sigma], R[x,x^{-1};\sigma]$ and $R[[x;\sigma]] $, where $\sigma$ is  an automorphism of $R$.
\end{prop}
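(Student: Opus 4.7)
The plan is to adapt the proof of \cite[Proposition 2.9]{GM} essentially verbatim, with the only change being to first premultiply the given element by a carefully chosen regular element of $R$.

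I would work in $R[x;\sigma]$ first; the other two cases should be essentially identical. Given $0 \ne f = \sum_{i} a_i x^i \in R[x;\sigma]$, let $k$ be the smallest index with $a_k \ne 0$ and factor $f = g(x)x^k$, where $g(x) = a_k + a_{k+1}x + \ldots + a_n x^{n-k}$ has nonzero constant term $a_k$. Using that $R$ is regular left fusible, I would pick a regular $s\in R$ such that $sa_k = c + w$, where $c$ is a left zero divisor of $R$ and $w$ is left regular in $R$. The first thing to verify is that $s$, viewed as a constant element of $R[x;\sigma]$, remains regular: since $\sigma$ is an automorphism, $\sigma^j(s)$ is regular for every $j \ge 0$, so the coefficient-wise expansion of $b(x)s$ and $sb(x)$ rules out any nontrivial left or right annihilator of $s$ in $R[x;\sigma]$.

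The candidate decomposition is then $sf = cx^k + h(x)x^k$, where $h(x) := w + sa_{k+1}x + \ldots + sa_n x^{n-k}$. The summand $cx^k$ is a left zero divisor of $R[x;\sigma]$: any $0\ne r\in R$ with $rc = 0$ annihilates $cx^k$ on the left (and if $c = 0$ the claim is trivial). For the other summand, $x^k$ is regular in $R[x;\sigma]$, so it suffices to show that $h(x)$ is left regular. This I would verify by the standard lowest-degree-coefficient argument: if $b(x) h(x) = 0$ with $b(x) \ne 0$ and $j_0$ is the smallest index having $b_{j_0}\ne 0$, then the coefficient of $x^{j_0}$ in $b(x)h(x)$ is $b_{j_0}\sigma^{j_0}(w)$, which must vanish; since $\sigma^{j_0}(w)$ is left regular this forces $b_{j_0} = 0$, a contradiction. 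Thus $sf = cx^k + h(x)x^k$ is a left fusible decomposition.

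The same argument applies to $R[x,x^{-1};\sigma]$ (allowing $k\in\Z$) and to $R[[x;\sigma]]$ (taking $k$ to be the order of the series). I do not expect any real obstacle; the one technical point is the standard derivation of left regularity of a series from left regularity of its lowest-degree coefficient, which uses in an essential way that $\sigma$ is an automorphism rather than merely an endomorphism, so that each $\sigma^{j_0}(w)$ remains left regular.
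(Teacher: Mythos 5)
Your proposal is correct and is exactly the argument the paper intends: the authors give no details beyond ``essentially the same proof as in \cite[Proposition 2.9]{GM}'', and your write-up is precisely that adaptation --- apply regular left fusibility to the lowest nonzero coefficient $a_k$, check that the resulting $s\in R$ stays regular in the extension, and run the standard lowest-degree-coefficient argument (using that $\sigma$ is an automorphism so $\sigma^{j}(w)$ remains left regular). No gaps.
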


\bigskip
\end{document}